\def\vvs{\vskip 1cm}
\def\vs{\vskip 0.30cm}
\def\vss{\vskip 0.05cm}
\def\la{\lambda}
\def\ep{\varepsilon}
\def\n1{\medmuskip=0mu n-1}
\def\k1{\medmuskip=0mu k+1}
\def\ktwo{\medmuskip=0mu k+2}
\def\leqs{\leqslant}
\def\geqs{\geqslant}
\newcommand{\Z}{\mathbb{Z}}
\theoremstyle{plain}
\newtheorem{thm}{Theorem}
\newtheorem{lem}{Lemma}
\newtheorem{cor}{Corollary}
\newtheorem*{conj}{Conjecture}
\newtheorem{qn}{Question}
\newcommand{\set}[1]{\left\{ #1 \right\}}
\title{On a conjecture of {H.\ Gupta}}
\author[Lecouturier]{Emmanuel Lecouturier$^*$}
\email{lecouturier.emmanuel@gmail.com}
\author[Zmiaikou]{David Zmiaikou$^{**}$}
\email{david.zmiaikou@gmail.com}
\date{\today}
\thanks{
$^*$Lyc\'ee Blaise Pascal, 20 rue Alexander Fleming, 91400 Orsay, France.
\\
\indent $^{**}$D\'epartement de Math\'ematiques, Universit\'e Paris-Sud 11, 91405 Orsay Cedex, France}
\begin{document}
\maketitle
\begin{abstract}
Denote by $r(n)$ the length of a shortest integer sequence on a circle containing all permutations of the set $\{1,2,...,n\}$ as subsequences. Hansraj Gupta conjectured in $1981$ that $r(n) \leqs \frac{n^2}{2}$. In this paper we confirm the conjecture for the case where $n$ is even, and show that $r(n) < \frac{n^2}{2} + \frac{n}{4} -1$ if $n$ is odd.
\end{abstract}

\vvs

\setcounter{section}{-1}
\section{Introduction}
Let $n>1$ be a positive integer. One would like to find the minimal number $r=r(n)$ for which there exists a sequence $x_1$, $x_2$, \ldots , $x_r$ such that any permutation of the first $n$ natural numbers is a subsequence of $x_j$, $x_{j+1}$, \ldots , $x_r$, $x_1$, $x_2$, \ldots , $x_{j-1}$ for at least one $j$, $1 \leqs j \leqs r$.

In other words, if we write the sequence $x_1$, $x_2$, \ldots , $x_r$ on the circumference of a circle, then any permutation of the set $\{1,2,...,n\}$ can be located starting from a point on the circle, moving in the clockwise direction and never turning back nor crossing the point of start. Such a sequence is called \textit{a rosary} of degree $n$. Examples of rosaries of degree $2$, $3$, $4$ and $5$ are given in Figure \ref{fig:rosary1}.

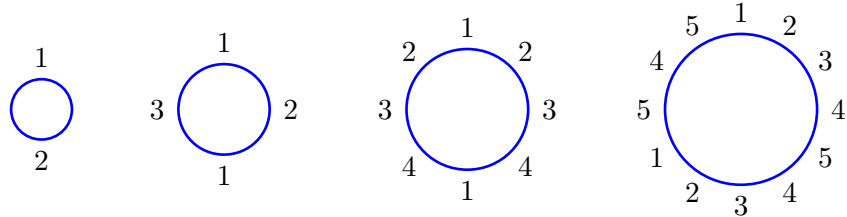
\begin{figure}[htbp]
   \begin{center}
   \begin{tikzpicture}[scale=.4]
      \draw[blue,line width=1pt] (0,0) circle (1cm);
      \foreach \angle/\num in {90/1,270/2}
      { \node at (\angle:1.7cm) {\num}; }
      
      \begin{scope}[xshift=6cm]
      \draw[blue,line width=1pt] (0,0) circle (1.5cm);
      \foreach \angle/\num in {0/2,90/1,180/3,270/1}
      { \node at (\angle:2.2cm) {\num}; }
      \end{scope}

      \begin{scope}[xshift=14cm]
      \draw[blue,line width=1pt] (0,0) circle (2cm);
      \foreach \angle/\num in {0/3,45/2,90/1,135/2,180/3,225/4,270/1,315/4}
      { \node at (\angle:2.7cm) {\num}; }
      \end{scope}

      \begin{scope}[xshift=23cm]
      \draw[blue,line width=1pt] (0,0) circle (2.5cm);
      \foreach \angle/\num in {0/4,30/3,60/2,90/1,120/5,150/4,180/5,210/1,240/2,270/3,300/4,330/5}
      { \node at (\angle:3.2cm) {\num}; }
      \end{scope}
   \end{tikzpicture}
   \caption{Examples of rosaries.}
   \label{fig:rosary1}
   \end{center}
\end{figure}

It is easy to show that $r(n) \leqs n^2-3n+4$, since the following sequence 
$$1 (2,\ldots ,n)_{n-2}\, 2 = 1,\underbrace{2, \ldots , n, 2, \ldots , n , \ldots, 2, \ldots , n}_{(n-2) \textrm{ times}}, 2$$
is a rosary of degree $n$. Indeed, for an arbitrary permutation $a_1=1$, $a_2$, \ldots , $a_n$ of the numbers $1$, $2$, \ldots , $n$, either the string $a_2$, \ldots , $a_n$ is a concatenation of at most $(n-2)$ increasing strings, or $a_2=n > a_3=n-1 > \ldots > a_n = 2$. 

The following conjecture was suggested by Hansraj Gupta in \cite{gupta}. It can also be found in the well-known book \cite{guy} under Problem E22.

\begin{conj}[H.\ Gupta]
The inequality  $r(n) \leqs \frac{n^2}{2} $ is satisfied for all integers $n>1$.
\end{conj}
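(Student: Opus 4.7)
The plan is to prove the conjecture by constructing, for every integer $n > 1$, an explicit cyclic sequence $R_n$ of length $\lfloor n^2/2 \rfloor$ that contains every permutation of $\{1, \ldots, n\}$ as a subsequence read in one direction starting from some point. Inspection of the examples in Figure~\ref{fig:rosary1} suggests the target construction should consist of alternating monotone blocks whose ranges telescope across $\{1, \ldots, n\}$, with each symbol appearing roughly $n/2$ times: exactly $n/2$ times when $n$ is even, and a mixture of $(n-1)/2$ and $(n+1)/2$ when $n$ is odd, distributed so that the total length is $(n^2-1)/2$.

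The first step is to extract a closed-form rule from the small cases. The rosary $3,2,1,2,3,4,1,4$ for $n=4$ decomposes as a descending block $(3,2,1)$, then an ascending block $(2,3,4)$, then the short block $(1,4)$; the rosary $4,3,2,1,5,4,5,1,2,3,4,5$ for $n=5$ has a similar alternating block structure with ``pivots'' at the extreme values $1$ and $n$. I would propose that the general $R_n$ be a concatenation $B_1 B_2 \cdots B_m$ of alternately descending and ascending runs, arranged so that consecutive blocks share an endpoint and the pivot values $1$ and $n$ recur at regular intervals around the circle.

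The verification of universality proceeds by decomposing an arbitrary permutation $\pi = a_1, \ldots, a_n$ into its maximal monotone runs and matching consecutive runs of $\pi$ to consecutive blocks of $R_n$: a descending run of $\pi$ is absorbed by a descending block of $R_n$, an ascending run by an ascending block, and the value at which $\pi$ changes direction is aligned with the pivot separating the corresponding blocks. The hardest permutations are those with many short monotone runs---most notably the alternating permutations---because they force the subsequence to switch blocks most frequently; these I would handle by a direct argument exploiting the regular spacing of the pivot copies of $1$ and $n$ around the circle, and possibly a separate base-case check for small $n$.

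The genuine obstacle is the odd case: for $n$ odd, the budget $(n^2-1)/2$ is so tight that every block must be used essentially to capacity. Blocks of two different sizes, $(n-1)/2$ and $(n+1)/2$, must be interleaved so that the matching argument still succeeds for permutations whose monotone-run lengths align poorly with the block pattern---for example, a descending run of $\pi$ of length $(n+1)/2$ cannot be absorbed by a block of size $(n-1)/2$. Ensuring that, for each permutation, one can choose a starting point on the circle from which every run of $\pi$ falls inside a block of matching orientation \emph{and} of sufficient length, uniformly in $n$, is where I expect the combinatorial heart of the problem to lie; overcoming it likely requires a delicate parity-based placement of the long and short blocks rather than a generic embedding scheme.
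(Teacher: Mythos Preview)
Your proposal is not a proof but an outline with openly acknowledged gaps, and the gap you flag is real: the paper itself does \emph{not} prove the conjecture for odd $n$. The paper establishes $r(n)\leqs n^2/2$ only when $n$ is even (Theorem~\ref{th:1}) and for odd $n$ obtains only the weaker bound $r(n)<n^2/2+n/4-1$ (Theorem~\ref{th:2} and its Corollary). So your claim to produce a rosary of length $\lfloor n^2/2\rfloor$ for \emph{every} $n$ goes beyond what the paper achieves, and your sketch does not actually supply the missing argument---you say yourself that the odd case ``is where I expect the combinatorial heart of the problem to lie.''

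Even for the even case, your reading of the small examples leads you to a different construction than the paper's. You parse the $n=4$ rosary as short alternating monotone blocks $(3,2,1)(2,3,4)(1,4)$ with ``telescoping ranges'' and pivots at $1$ and $n$; the paper instead reads it as $(1,2,3,4)(1,4,3,2)$, an instance of the general pattern $(1,2,\dots,n)_k\,(1,n,n{-}1,\dots,2)_k$ for $n=4k$ (and a companion for $n=4k+2$). The paper's proof is not a block-by-block matching of monotone runs of $\pi$ to monotone blocks of the rosary; rather, it encodes the permutation by its up/down code, introduces the parameters $\la_1,\dots,\la_x$ counting zeros between consecutive ones, and uses an averaging argument (summing cyclic windows $\la_{i+1}+\dots+\la_{i+k}$ over all $i$) to force a quadratic inequality in $x$ whose discriminant pins down $x$ exactly. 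The residual equality cases are then handled directly. Your run-matching heuristic, by contrast, is never made precise enough to check, and the ``hardest'' alternating permutations you single out are in fact the \emph{easiest} in the paper's framework (they correspond to all $\la_i=1$).

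Finally, note the explicit warning at the end of Section~3: the paper exhibits concrete permutations (for $n=21$ and $n=33$) that are \emph{not} subsequences of the natural length-$\lfloor n^2/2\rfloor$ candidate cycles of the form $(1,\dots,n)_k(1,\dots,N)(n,\dots,1)_{k'}$. This shows that the odd case cannot be closed by a minor tweak of the even-case construction, and any genuine proof of the full conjecture would need an idea not present in either the paper or your outline.
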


\vs Here is the structure of the present paper:
\begin{enumerate}
  \item[1)] For any even $n$, a rosary of length $\frac{n^2}{2}$ is constructed.
  \item[2)] Examples for which the strict inequality $r(n) < \frac{n^2}{2}$ takes place are given.
  \item[3)] For any odd $n$, we construct a rosary of length less than $\frac{n^2}{2}+\frac{n}{4}-1$, and give examples illustrating that the expected estimation $\frac{n^2}{2}$ is not reachable by our method. 
  \item[4)] The asymptotics $r(n) \sim \frac{n^2}{2}$ as $n \rightarrow +\infty$ is established.
\end{enumerate}

\vs
\noindent \textbf{Acknowledgements.} We would like to express our gratitude to Jean-Christophe Yoccoz, the Ph.D. thesis advisor of the second author, and to Tsimafei Khatkevich for useful discussions and comments. 

\vvs
\section{A proof for $n$ even}
By default all sequences we deal with here are cyclic (one may imagine that they are written on a circle). A cyclic sequence will be shortly called a \emph{cycle}. If we want to emphasize that $a_1, a_2, \dots, a_n$  is considered as an ordinary sequence (and not a cycle), then it will be called a \emph{string}.

A \textit{block} of a sequence (cycle or string) is a subsequence of its consecutive elements. For instance $(4,5,3)$ and $(2,1,7,4)$ are blocks of the cycle $7$, $4$, $5$, $3$, $6$, $2$, $1$. A \textit{maximal increasing block} is an increasing block $a_{j+1} < a_{j+2} < \ldots < a_{j+k}$ such that $a_j>a_{j+1}$ and $a_{j+k} > a_{j+k+1}$. By analogy we define a \emph{maximal decreasing block}. In the example above the blocks $(7, 4)$, $(5, 3)$ and $(6, 2, 1)$ are maximal decreasing, and the blocks $(4, 5)$, $(3, 6)$, $(2)$ and $(1, 7)$ are maximal increasing.

The \emph{code} of a cycle $a_1, a_2, \dots, a_n$ is the following cyclic sequence
\begin{equation*}
   H(a_2 - a_1),\; H(a_3 - a_2),\; \dots,\; H(a_n - a_{n-1}),\; H(a_1 - a_n),
\end{equation*}
where $H$ denotes the Heaviside step function, $H(t)=\left\{ \begin{array}{cl} 0 & \textrm{if } t<0,\\ 1 & \textrm{if } t\geqs 0.\end{array}\right.$
Thus, the code of the cycle $7$, $4$, $5$, $3$, $6$, $2$, $1$ is $0, 1, 0, 1, 0, 0, 1$.

The \emph{code} of a string $a_1, a_2, \dots, a_n$ is the following string
\begin{equation*}
   H(a_2 - a_1),\; H(a_3 - a_2),\; \dots,\; H(a_n - a_{n-1}).
\end{equation*}
Note that the code of a string of length $n$ is a string of length $(n-1)$.

\vs Consider a permutation $a_1, a_2, \dots, a_n$ of the first $n$ natural numbers regarded as a cyclic sequence. Let the cycle $c_1, c_2, \dots, c_n$ be its code. Suppose that the code has $x$ ones $c_{i_1}=c_{i_2}= \ldots= c_{i_x}=1$ and $y$ zeros, where $x+y=n$. 

\hspace{-0.9cm}
\tabcolsep=1em
\begin{tabular}{m{10.5cm}m{4cm}}
\parindent=1em Denote by $\la_1$ the number of zeros between $c_{i_1}$ and $c_{i_2}$, by $\la_2$ the number of zeros between $c_{i_2}$ and $c_{i_3}$, etc., by $\la_x$ the number of zeros between $c_{i_x}$ and $c_{i_1}$. We have
\begin{center}
   $\la_1 + \la_2 + \ldots + \la_x = y.$
\end{center}
It is clear that the integers $(\la_i+1)$ are lengths of the maximal decreasing blocks of the cycle $a_1, a_2, \dots, a_n$.
&
\begin{tabular}{m{4cm}}
\begin{tikzpicture}[scale=.4]
      \draw[blue,line width=1pt] (0,0) circle (2cm);
      \foreach \angle/\num in {0/1,30/0,60/0,90/1,120/0,150/1,180/$\dots$,210/1,240/0,270/0,300/0,330/1}
      { \node at (\angle:2.7cm) {\small\num}; }
      \foreach \angle/\num in {40/$\la_1=2$,-15/$\la_2=0$,-90/$\la_3=3$,120/$\la_n=1$}
      { \node at (\angle:4cm) {\tiny \num}; }
\end{tikzpicture}
\end{tabular}
\end{tabular}

\begin{lem} \label{lem:1}
Let $K$ and $M$ be positive integers. If there exists an index $i\in [1, x]$ such that
\begin{equation*}
  \la_{i+1} + \la_{i+2} + \ldots + \la_{i+K} \geqs y-M+1,
\end{equation*}
then the cycle $a_1, a_2, \dots, a_n$ is a subsequence of the cycle\footnote{We write $(x_1, x_2, \dots, x_m)_k$ for the juxtaposition of $k$ copies of the sequence in the brackets.}
$$(1, 2, \dots, n)_{M-1}\, (1, 2, \dots, \n1)\, (n, \dots, 2, 1)_{K-1}\, (n, \dots, 3, 2).$$
\end{lem}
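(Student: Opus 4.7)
My plan is to choose a cyclic starting point for the permutation and partition the resulting linear sequence into at most $M$ maximal increasing runs followed by exactly $K$ maximal decreasing runs, then embed the $r$-th increasing run (resp.\ $s$-th decreasing run) into the $r$-th ascending pass $P_r = (1, 2, \ldots, n)$ (resp.\ $s$-th descending pass $Q_s = (n, n-1, \ldots, 1)$) of the target cycle.

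After a cyclic relabelling of the decreasing blocks $D_1, \ldots, D_x$, I may assume the favored blocks are $D_1, \ldots, D_K$ with $\sum_{j=1}^K \la_j \geqs y - M + 1$, so the non-favored blocks $D_{K+1}, \ldots, D_x$ satisfy $\sum_{j=K+1}^x \la_j \leqs M - 1$. I take $a_1$ to be the first element of $D_{K+1}$, so the cyclic reading of the permutation becomes $[D_{K+1}, \ldots, D_x]\,[D_1, \ldots, D_K]$, with consecutive blocks separated by a single ascent step. The initial part contains exactly $\sum_{j=K+1}^x \la_j \leqs M - 1$ descents and thus decomposes into at most $M$ maximal increasing runs; the final part contains $K - 1$ ascents and decomposes into the $K$ maximal decreasing runs $D_1, \ldots, D_K$.

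Greedy assignment of runs to passes succeeds automatically for every complete pass; the only potential failures concern the truncated last passes $P_M = (1, \ldots, n-1)$, which cannot hold $n$, and $Q_K = (n, \ldots, 2)$, which cannot hold $1$. Since $n$ is the global maximum it is a peak, hence the first element of some $D_\ell$ with $\la_\ell \geqs 1$. If $\ell > K$, then $n$ lies in the initial part and the increasing run containing $n$ has index $\sum_{k=K+1}^{\ell-1} \la_k + 1 \leqs M - \la_\ell \leqs M - 1$, so $n$ never lands in $P_M$. If $\ell \leqs K$, then $n$ is placed in $Q_\ell$, which contains $n$ even when $\ell = K$.

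The main obstacle is the symmetric check for $Q_K$: the element $1$, being a valley, is the last element of some $D_\ell$, and if $\ell = K$ the greedy procedure would try to send $1$ to $Q_K$. I resolve this by a case split on whether $1 \in D_K$. When $1 \notin D_K$, the embedding above already avoids the problem. When $1 \in D_K$, I instead take $a_1 = 1$: the new initial part $[1, D_{K+1}, \ldots, D_x]$ introduces no extra descent (since $1$ is a valley followed by an ascent), so the partition into at most $M$ increasing runs and the argument for $n$ go through unchanged, with $1$ placed at the very start of $P_1$; meanwhile the new final part is $[D_1, \ldots, D_{K-1}, D_K^*]$ with $D_K^* = D_K \setminus \{1\} \subseteq \{2, \ldots, n\}$, which embeds comfortably into $Q_K$. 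This completes the embedding in both cases.
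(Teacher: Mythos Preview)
Your proof is correct and follows the same strategy as the paper: cyclically shift the permutation so that it splits into at most $M$ increasing runs followed by $K$ decreasing runs, then embed each run into the corresponding ascending or descending pass of the target cycle. Your treatment is in fact more careful than the paper's, which simply asserts the final embedding without discussing the truncated passes; your explicit verification that $n$ never falls into $P_M$ and your case split on whether $1\in D_K$ fill in details the paper leaves implicit (the paper effectively relies on the cyclic structure of the target, where the missing $1$ at the end of $Q_K$ is recovered from the start of $P_1$).
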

\begin{proof}
Without loss of generality (up to a cyclic permutation), we may suppose that the code of the cycle $a_1, a_2, \dots, a_n$ has the following form
\begin{equation*}
   c_1, c_2, \dots, c_n = 1 (0)_{\la_1} 1 (0)_{\la_2} \dots 1 (0)_{\la_{x-K}} 1 (0)_{\la_{x-K+1}} \dots 1 (0)_{\la_x},
\end{equation*}
where
\begin{equation*}
  \la_{x-K+1} + \la_{x-K+2} + \ldots + \la_{x} \geqs y-M+1,
\end{equation*}
and so
\begin{equation*}
  \la_{1} + \la_{2} + \ldots + \la_{x-K} \leqs M-1.
\end{equation*}
A part $(0)_{\la} 1 = \underbrace{0, 0, \dots, 0}_{\la}, 1$ of the code corresponds to inequalities $a_{\bullet}> a_{\bullet+1}> \ldots > a_{\bullet+\la}<a_{\bullet+\la+1}$. On one hand, this gives $\la$ increasing blocks $(a_{\bullet+1}), \ldots, (a_{\bullet+\la-1}), (a_{\bullet+\la}<a_{\bullet+\la+1})$, and on the other hand, one decreasing block $a_{\bullet}> a_{\bullet+1}> \ldots > a_{\bullet+\la}$. This means that the sequence $a_1, a_2, \dots, a_n$ is a juxtaposition of 
\begin{equation*}
  1+\la_{1} + \la_{2} + \ldots + \la_{x-K} \leqs M
\end{equation*}
increasing blocks, the first one being $(a_1<a_2)$, and $K$ decreasing blocks encoded by 
$$(0)_{\la_{x-K+1}} 1 (0)_{\la_{x-K+2}} \dots 1 (0)_{\la_x}\,.$$
Hence, it is a subsequence of $(1, 2, \dots, n)_{M-1}\, (1, 2, \dots, \n1)\, (n, \dots, 2, 1)_{K-1}\, (n, \dots, 3, 2)$.
\end{proof}

\vs The following theorem shows that $r(n) \leqs \frac{n^2}{2}$ for any even positive integer $n$.

\setlength{\leftmargini}{2.5em}
\begin{thm} \label{th:1}
Let $k$ be a positive integer.
\begin{itemize}
   \item[\textbf{a)}] If $n=4 k$, then the sequence
   \begin{equation}\label{eq:rosary4K}
   (1, 2, \dots, n)_k\, (1, n, \n1, \dots, 2)_k 
   \end{equation}
   is a rosary of degree $n$.\vs

   \item[\textbf{b)}] If $n=4 k+2$, then the sequence
   \begin{equation}\label{eq:rosary4K2}
   (1, 2, \dots, n)_{k+1}\, (1, n, \n1, \dots, 2)_k
   \end{equation}
   is a rosary of degree $n$.
\end{itemize}
\end{thm}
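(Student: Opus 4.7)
My plan is to derive both parts of Theorem~\ref{th:1} from Lemma~\ref{lem:1} with the parameter choice $K = M = k$ in part~(a) and $K = k$, $M = k + 1$ in part~(b). In either case the Lemma~\ref{lem:1} target has length $(K + M)n - 2 = n^2/2 - 2$, so it is just two shorter than the proposed rosary.

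The first step is to exhibit the Lemma~\ref{lem:1} target as a subsequence of the rosary. In part~(a), the target
\begin{equation*}
(1, 2, \ldots, n)_{k-1}(1, 2, \ldots, n-1)(n, \ldots, 2, 1)_{k-1}(n, \ldots, 3, 2)
\end{equation*}
is obtained from~\eqref{eq:rosary4K} by taking the first $kn - 1$ positions of the rosary (its up region with the terminal $n$ omitted) for the ascending part, skipping the two boundary elements $n$ and $1$ at positions $kn$ and $kn + 1$, and reading the descending part from the down region $(1, n, n-1, \ldots, 2)_k$: each full block $(n, n-1, \ldots, 2, 1)$ in the target is formed as the descending tail $(n, n-1, \ldots, 2)$ of one down-copy joined with the leading $1$ of the next, and the final $(n, \ldots, 3, 2)$ is the last down-copy's tail. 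Part~(b) is entirely parallel, with the extra $(1, \ldots, n)$ block in~\eqref{eq:rosary4K2} absorbing the extra ascending copy of the target.

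The second step is to verify Lemma~\ref{lem:1}'s hypothesis for every cyclic permutation. Summing the $x$ cyclic $K$-windows of the $\lambda$-sequence gives $Ky$; if the maximum window sum were strictly less than $y - M + 1$, then every complementary $(x - K)$-window would sum to $\geq M$, giving $(x - K)y \geq xM$, i.e., $xy \geq Ky + Mx$. In part~(a) this yields $xy \geq k(x + y) = 4k^2$, which combined with AM--GM $xy \leq 4k^2$ forces $x = y = 2k$ and makes the $\lambda$-sequence periodic of period $k$ with per-period sum $k$; part~(b) yields the analogous constraint $x \in \{2k, 2k + 1\}$ together with the corresponding periodicity of $\lambda$.

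The main obstacle will be to dispatch these exceptional ``balanced'' permutations---including the pure zigzag $\lambda_j \equiv 1$---where the hypothesis of Lemma~\ref{lem:1} narrowly fails. I plan to handle them by a direct construction, choosing to linearize the rosary starting inside the down region; for part~(a) this produces the form $(1, n, n-1, \ldots, 2)_k (1, 2, \ldots, n)_k$, whose $k$ long descending runs followed by $k$ long ascending runs should accommodate any such periodic permutation via a matching between its $k$-periodic $\lambda$-structure and the $n$-period copy structure of the rosary. Part~(b) is dispatched analogously.
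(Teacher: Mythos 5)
Your proposal follows the paper's proof line for line: Lemma \ref{lem:1} with $K=M=k$ (resp.\ $K=k$, $M=k+1$), the same window-summation argument forcing $x=y=2k$ (resp.\ $x\in\{2k,2k+1\}$) with all window sums equal, and a direct embedding of the resulting ``balanced'' permutations into the rotation $(1,n,\dots,2)_k\,(1,2,\dots,n)_k$ of the rosary. The one part you leave as a sketch --- the ``matching'' in the exceptional cases --- is exactly what the paper makes precise: normalizing $a_1=1$, the first $2k+1$ terms consist of $1$ followed by $k$ decreasing blocks (of total length $k+\sum_{i\leqslant k}\lambda_i=2k$) and fit in the down region, while the remaining terms form $k$ (resp.\ $k+1$, or the zigzag's $k+1$) increasing blocks and fit in the up region, so your plan does go through.
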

\begin{proof} 
\textbf{a)} Assume that a permutation $a_1, a_2, \dots, a_n$ of the numbers $1, 2, \dots, n$ does not occur in (\ref{eq:rosary4K}). Then by Lemma \ref{lem:1} with $K=M=k$, one has
\begin{center}
  \tikz{\node[draw,rounded corners,text centered] at (0,0) {$\la_{i+1} + \la_{i+2} + \ldots + \la_{i+k} \leqs y-k, \quad\textrm{for any } 1\leqs i\leqs x.$};}
\end{center}
Summing up the $x$ inequalities leads to
\begin{multline*}
  (\la_{1} + \la_{2} + \ldots + \la_{k}) + (\la_{2} + \la_{3} + \ldots + \la_{k+1}) + \ldots + (\la_{x} + \la_{1} + \ldots + \la_{k-1}) \\
 = k(\la_{1} + \la_{2} + \ldots + \la_{x}) = k y \leqs x(y-k). 
\end{multline*}
Making the substitutions $y=n-x$ and $n=4 k$, one obtains
\begin{eqnarray*}
k(n-x)\leqs x(-x+n-k), \\
x^2-n x+k n\leqs 0, \\
(x-2 k)^2\leqs 0. 
\end{eqnarray*}
Hence, \underline{$x= 2 k$, $y=2 k$}, and all the inequalities are actually equalities. In particular,  
\begin{equation}\label{eq:1}
  \la_{i+1} + \la_{i+2} + \ldots + \la_{i+k} = y-k=k. \qquad \textrm{for any } 1\leqs i\leqs x.
\end{equation}
Let us show that in this case the cycle $a_1, a_2, \dots, a_n$ is a subsequence of (\ref{eq:rosary4K}). Without loss of generality, we may take $a_1=1$ implying also that  $c_n=0$ and $c_1=1$ (as $a_n> a_1< a_2$). 

First, the code of the string
$$a_1=1, a_2, a_3, \dots, a_{2k+1}$$
equals the part $1 (0)_{\la_1} 1 (0)_{\la_2} \dots 1 (0)_{\la_{k}}$ of the sequence $c_1, c_2, \dots, c_n$, and occurs in the string
$$(1, n, \n1, \dots, 2)_k,$$
since $1 (0)_{\la}$ encodes inequalities $a_{\bullet}< a_{\bullet+1}> a_{\bullet+2}> \ldots > a_{\bullet+\la}>a_{\bullet+\la+1}$, and thus the string $a_2, a_3, \dots, a_{2k+1}$ consists of $k$ maximal decreasing blocks.

Second, the remaining string
$$a_{2k+2}, a_{2k+3}, \dots, a_{4k}$$
has the code $(0)_{\la_{k+1}} 1 (0)_{\la_{k+2}} \dots 1 (0)_{\la_{2k}\,-1}$ and is a subsequence of the following string
$$(1, 2, \dots, n)_k.$$
Indeed, each part $1 (0)_{\la_i}$ gives $\la_i$ increasing blocks $(a_{\bullet}< a_{\bullet+1}), (a_{\bullet+2}), \ldots, (a_{\bullet+\la_i})$, so that the last term $a_{\bullet+\la_i+1}$ is included in the first increasing block $(a_{\bullet+\la_i+1}< a_{\bullet+\la_i+2})$ for the part $1 (0)_{\la_{i+1}}$. There is an exception for $1 (0)_{\la_{2k}\,-1}$: the last term $a_{4k}$ has to be counted as a separate increasing block. Hence, the string $a_{2k+2}, a_{2k+3}, \dots, a_{4k}$ consists of 
$$ \la_{k+1} + \la_{k+2} + \ldots + (\la_{2k}-1)+1 =k$$
increasing blocks due to (\ref{eq:1}).

\vss
\begin{small}
\noindent\hspace{1cm}
$\begin{array}{rcl}
  \textrm{An example for } k=2, n=8: \quad a_1, a_2, \dots, a_8 &=& 1, 5, 7, 6, 3, 4, 8, 2 \\
  c_1, c_2, \dots, c_8 &=& 1, 1, 0, 0, 1, 1, 0, 0 \\
  \la_1, \la_2, \la_3, \la_4 &=& 0, 2, 0, 2,
\end{array}$

\hspace{0.7cm}
where we get two decreasing blocks $(5)$, $(7, 6, 3)$ and two increasing blocks $(4, 8)$, $(2)$.
\end{small}

\vss
Therefore, the string $a_1, a_2, \dots, a_n$ is a subsequence of the string
$$(1, n, \n1, \dots, 2)_k\, (1, 2, \dots, n)_k.$$
This brings a contradiction the initial assumption and, thus, proves the point \textbf{a)} of the theorem.

\vs\noindent\textbf{b)} For $n=4k+2$, we proceed on the analogy of \textbf{a)}. Assume that a cycle $a_1, a_2, \dots, a_n$ is not a subsequence of the cycle
$$(1, 2, \dots, n)_{k+1}\, (1, n, \n1, \dots, 2)_k.$$
Then by Lemma \ref{lem:1} with $K=k$ and $M=k+1$, it follows that
\begin{center}
  \tikz{\node[draw,rounded corners,text centered] at (0,0) {$\la_{i+1} + \la_{i+2} + \ldots + \la_{i+k} \leqs y-k-1, \quad\textrm{for any } 1\leqs i\leqs x.$};}
\end{center}
Summing up all these $x$ inequalities, one gets
$$k(\la_{1} + \la_{2} + \ldots + \la_{x}) = k y \leqs x(y-k-1).$$
The substitutions $y=n-x$ and $n=4k+2$ give
\begin{eqnarray*}
k(n-x)\leqs x(-x+n-k-1), \\
x^2 - (n-1) x+k n\leqs 0, \\
x^2 - (4k+1) x+k (4k+2)\leqs 0, \\
(x-2 k)(x-(2k+1))\leqs 0. 
\end{eqnarray*}
Hence, the integer $x$ must lie in the interval $[2k, 2k+1]$, that is, either $x=2 k$ or $x=2k+1$.

\vss
If \underline{$x=2k$, $y=2k+2$}, then 
\begin{equation*}
  \la_{i+1} + \la_{i+2} + \ldots + \la_{i+k} = y-k-1=k+1, \qquad \textrm{for any } 1\leqs i\leqs x.
\end{equation*}
By analogy with \textbf{a)}, the string $a_1=1, a_2, \dots, a_{2k+2}$ has the code $1 (0)_{\la_1} 1 (0)_{\la_2} \dots 1 (0)_{\la_{k}}$ and is a subsequence of the string 
$$(1, n, \n1, \dots, 2)_k.$$
The code of the string $a_{2k+3}, a_{2k+4}, \dots, a_{4k+2}$ equals $(0)_{\la_{k+1}} 1 (0)_{\la_{k+2}} \dots 1 (0)_{\la_{2k}\,-1}$, where $\la_{k+1} + \la_{k+2} + \ldots + \la_{2k} =k+1$, and so the string occurs as a subsequence in the string $$(1, 2, \dots, n)_{k+1}.$$

\vss
If \underline{$x=2k+1$, $y=2k+1$}, then the following equalities take place 
\begin{equation*}
  \la_{i+1} + \la_{i+2} + \ldots + \la_{i+k} = y-k-1=k, \qquad \textrm{for any } 1\leqs i\leqs x,
\end{equation*}
and so $\la_{i+k} - \la_i = (\la_{i+1}+\ldots+\la_{i+k}) - (\la_{i}+\ldots+\la_{i+k-1})=k-k=0$. Since the integers $k$ and $(2k+1)$ are coprime, all the numbers $\la_{1}, \la_{2}, \ldots, \la_{2k+1}$ are equal (to 1). This means that the code of the cycle $a_1, a_2, \dots, a_n$ is $(1, 0)_{2k+1}=1, 0, 1, 0, \dots, 1, 0$. Taking without loss of generality $a_1=1$, we have $k$ decreasing blocks
$(a_2>a_3),\; (a_4>a_5),\; \dots,\; (a_{2k}>a_{2k+1})$
and $(k+1)$ increasing blocks
$(a_{2k+2}),\; (a_{2k+3}<a_{2k+4}),\; \dots,\; (a_{4k+1}<a_{4k+2}).$
Therefore, the string $a_1=1, a_2, \dots, a_n$ is a subsequence of 
$(1, n, \n1, \dots, 2)_k\, (1, 2, \dots, n)_k.$

\vss
\begin{small}
\noindent\hspace{1cm}
$\begin{array}{rcl}
  \textrm{An example for } k=2, n=10: \quad a_1, a_2, \dots, a_{10} &=& 1, 4, 2, 10, 6, 7, 3, 9, 5, 8 \\
  c_1, c_2, \dots, c_{10} &=& 1, 0, 1, 0, 1, 0, 1, 0, 1, 0 \\
  \la_1, \la_2, \la_3, \la_4, \la_5 &=& 1, 1, 1, 1, 1,
\end{array}$

\hspace{0.7cm}
where we get the blocks $(4>2)$, $(10>6)$ and $(7)$, $(3<9)$, $(5<8)$, so that the 

\hspace{0.7cm}
string $\{a_i\}$ occurs in $(1, 10, 9, \dots, 2)_2\, (1, 2, \dots, 10)_3$.
\end{small}

\vss
Drawing a conclusion, in both cases we have a contradiction to the assumption that $a_1, a_2, \dots, a_n$ is not a subsequence of (\ref{eq:rosary4K2}).
\end{proof}

\vs
\section{Examples where $r(n)< \frac{n^2}{2}$ }
We are going to show that the cycle given in Figure \ref{fig:rosary2} is a rosary of degree 6 and length 17. Indeed, let $a_1, a_2, a_3, a_4, a_5, a_6$ be a permutation of the first $6$ positive integers. 
\begin{figure}[htbp]
   \begin{center}
   \begin{tikzpicture}[scale=.5]
      \draw[blue,line width=1pt] (0,0) circle (2.5cm);
      \foreach \angle/\num in {0/$1_1$,-21/2,-42/3,-63/4,-84/5,-105/6,-126/$1_2$,-147/3,-168/4,-189/5,-210/6,-231/2,-252/$1_3$,-273/6,-294/5,-315/4,-336/3}
      { \node at (\angle:3.2cm) {\num}; }
   \end{tikzpicture}
   \caption{A rosary of degree 6.}
   \label{fig:rosary2}
   \end{center}
\end{figure}
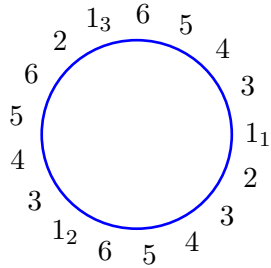
We would like to pick the permutation among the numbers on the circle, moving in the clockwise direction. It is sufficient to check the 11 cases that are listed in the Table \ref{tab:counterexample}.

\begin{table}[htdp]
\begin{center} 
\caption{Any permutation of $1, 2, \dots, 6$ is a subsequence of the cycle in Figure \ref{fig:rosary2}.}
\begin{tabular}{| c | l | c |}
  \hline
  \textbf{The permutation} & \hspace{1.2cm}\textbf{Cases} & \begin{tabular}{l}\textbf{The number on the} \\ \textbf{circle to start with} \end{tabular}\\ \hline\hline
  $1, 2, a_3, a_4, a_5, a_6$ & $a_3<a_4 \textrm{ or } a_5>a_6$ & $1_1$ \\ 
        & $a_3>a_4 \textrm{ and } a_5<a_6$ & $1_2$ \\ \hline\hline
        
  $1, a_2, 2, a_4, a_5, a_6$ & $a_4>a_5$ & $1_2$ \\
        & $a_4<a_5$ & $1_3$ \\ \hline\hline
        
  $1, a_2, a_3, 2, a_5, a_6$ & $a_2<a_3$ & $1_2$ \\
        & $a_2>a_3$ & $1_3$ \\ \hline\hline
        
  $1, a_2, a_3, a_4, 2, a_6$ & $a_2<a_3 \textrm{ or } a_3<a_4$ & $1_1$ \\
        & $a_2>a_3>a_4$ & $1_3$ \\ \hline\hline
        
  $1, a_2, a_3, a_4, a_5, 2$ & $a_2<a_3 \textrm{ and } a_4<a_5$ & $1_1$ \\
        & $a_2<a_3 \textrm{ and } a_4>a_5$ & $1_2$ \\
        & $a_2>a_3$ & $1_ 3$ \\ \hline
\end{tabular}
 \label{tab:counterexample}
\end{center}
\end{table} 

Here are some other rosaries of degree 6 and length 17:
$$\begin{array}{c}
1, 2, 3, 4, 5, 6, 1, 2, 1, 5, 6, 4, 3, 1, 2, 6, 5; \\
1, 2, 3, 4, 5, 6, 1, 2, 3, 5, 6, 4, 3, 2, 1, 6, 5; \\
1, 2, 3, 4, 5, 6, 1, 2, 4, 5, 6, 3, 1, 2, 6, 5, 4; \\
1, 2, 3, 4, 5, 6, 1, 3, 2, 5, 4, 3, 2, 1, 6, 4, 5; \\
1, 2, 3, 4, 5, 6, 1, 3, 4, 2, 1, 6, 5, 4, 3, 5, 6; \\
1, 2, 3, 4, 5, 6, 1, 3, 5, 6, 4, 2, 1, 6, 5, 4, 3; \\
1, 2, 3, 4, 5, 6, 1, 3, 6, 5, 4, 2, 1, 6, 5, 4, 3; \\
1, 2, 3, 4, 5, 6, 1, 4, 3, 2, 1, 6, 5, 2, 3, 4, 6; \\
1, 2, 3, 4, 5, 6, 1, 4, 5, 3, 2, 1, 6, 2, 3, 5, 4; \\
1, 2, 3, 4, 5, 6, 1, 5, 4, 3, 6, 2, 1, 6, 3, 4, 5.\,
\end{array}$$
A long list of such rosaries, containing a block $(1, 2, 3, 4, 5, 6)$, was found by a program written by the authors in Java. 

An example for $n=8$: the cycle
$$1, 2, 3, 4, 5, 6, 7, 8,  1, 2, 3, 4, 5, 6, 7, 8, 6, 5, 4, 3, 2, 1, 7, 6, 5, 4, 3, 2, 1, 8, 7$$
is a rosary of length 31.

\begin{qn} 
Do there exist infinitely many \emph{even} positive integers $n$ for which $r(n)< \frac{n^2}{2}$?
\end{qn}

\vvs
\section{An upper bound in the case that $n$ is odd}
Consider a permutation $a_1, a_2, \dots, a_n$ of the first $n$ natural numbers regarded as a cyclic sequence. Let the cycle 
$$c_1, c_2, \dots, c_n = 1 (0)_{\la_1} 1 (0)_{\la_2} \dots  1 (0)_{\la_x}$$ 
be its code consisting of $x$ ones and $y$ zeros, that is,
$$\la_1 + \la_2 + \ldots + \la_x = y \quad \textrm{and}\quad x+y=n.$$ 

A part $1 (0)_{\la_i}$ of the cycle $c_1, c_2, \dots, c_n$ encodes inequalities $$a_{\bullet}< a_{\bullet+1}> a_{\bullet+2}> \ldots > a_{\bullet+\la_i}>a_{\bullet+\la_i+1}.$$ An index $i\in [1, x]$ will be called \emph{$(K, M, N)$-lucky} if 
$$\la_{i+1} + \la_{i+2} + \ldots + \la_{i+K}\geqs y-M$$
and the number $a_{\bullet+\la_i+1}$ does not exceed $N$. 

\begin{lem}\label{lem:2}
If there exists a $(K, M, N)$-lucky index $i$, then the permutation $a_1, a_2, \dots, a_n$ is a subsequence of the cycle
$$(1, 2, \dots, n)_{M}\, (1, 2, \dots, N)\, (n, \dots, 2, 1)_{K}.$$
\end{lem}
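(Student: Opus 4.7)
The plan is to follow the template of Lemma~\ref{lem:1}, using the extra value constraint $a_{\bullet+\la_i+1}\leqs N$ to insert the initial element of the rotated permutation into the partial block $(1,2,\ldots,N)$ instead of using a full ascending copy.

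First, by a cyclic renumbering of the indices I would put the lucky index at position~$x$, so that in the new indexing $a_1$ equals the valley $a_{\bullet+\la_i+1}$, giving $c_n=0$, $c_1=1$, and the code $1(0)_{\la_1}\ldots 1(0)_{\la_x}$ satisfying
\[
\la_1+\la_2+\cdots+\la_K\geqs y-M,\qquad\text{equivalently}\qquad\la_{K+1}+\cdots+\la_x\leqs M,
\]
while $a_1\leqs N$. Set $p=1+K+\la_1+\cdots+\la_K$ and split the string at index~$p$: the ``front'' piece $a_1,a_2,\ldots,a_p$ carries the first $K$ parts of the code, while the ``back'' piece $a_{p+1},\ldots,a_n$ has code containing exactly $(\la_{K+1}+\cdots+\la_x)-1\leqs M-1$ zeros (the cyclic zero $c_n$ is not part of this string).

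Next, I would read the target cycle $(1,\ldots,n)_M\,(1,\ldots,N)\,(n,\ldots,2,1)_K$ starting at the position of $a_1$ inside the partial block $(1,\ldots,N)$; this is legitimate precisely because $a_1\leqs N$. Place $a_1$ at that position. Decompose $a_2,\ldots,a_p$ into $K$ decreasing chunks, one per part $1(0)_{\la_i}$ of the front code (a singleton when $\la_i=0$, otherwise the decreasing run $a_{\sigma_i+2}>\cdots>a_{\sigma_i+\la_i+2}$ with $\sigma_i=(i-1)+\la_1+\cdots+\la_{i-1}$), and embed the $i$-th chunk into the $i$-th descending copy of $(n,\ldots,2,1)$ at the positions matching its values. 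Then decompose $a_{p+1},\ldots,a_n$ into its maximal increasing blocks: since a string with $d$ descents decomposes into $d+1$ such blocks, the back piece splits into at most $(\la_{K+1}+\cdots+\la_x)-1+1\leqs M$ increasing blocks, and I embed the $j$-th one into the $j$-th ascending copy of $(1,\ldots,n)$.

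Verification is then essentially mechanical: each chunk or block is monotone and has values in $\{1,\ldots,n\}$, so it fits within a single monotone copy of length~$n$; between two consecutive chunks of the same type the reading advances to the next target copy because the connecting step in the permutation has opposite monotonicity; and the transition from the $K$-th descending copy to the first ascending copy absorbs the single ascent $a_p<a_{p+1}$ via the cyclic reading. The main (minor) obstacle is the bookkeeping when some $\la_i=0$: the corresponding decreasing chunk collapses to a singleton, which is harmless for the embedding itself but must be tracked to keep the counts of $K$ front chunks and at most $M$ back blocks accurate.
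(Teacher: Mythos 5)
Your architecture is the same as the paper's: one small element into the block $(1,2,\dots,N)$, $K$ decreasing chunks into the descending copies, at most $M$ increasing blocks into the ascending copies. But there is a genuine gap at the step where you assert that after the renumbering $c_n=0$, and hence that the code of the back piece has exactly $(\la_{K+1}+\cdots+\la_x)-1$ zeros. This fails when $\la_x=0$ in your renumbering, i.e.\ when the lucky part $1(0)_{\la_i}$ itself contains no zeros: then $a_1=a_{\bullet+\la_i+1}=a_{\bullet+1}$ with $a_{\bullet}<a_{\bullet+1}$, so $c_n=H(a_1-a_n)=1$, the back piece's code retains all $\la_{K+1}+\cdots+\la_x$ zeros, and your count yields up to $(\la_{K+1}+\cdots+\la_x)+1\leqs M+1$ maximal increasing blocks --- one more than the $M$ ascending copies available whenever $\la_1+\cdots+\la_K=y-M$ exactly. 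This is not a negligible corner case: in the application to Theorem~\ref{th:2}\textbf{a)} (the case $q=k$, where $\sum_{i\in E}\la_i\leqs 1$) almost every lucky index has $\la_i=0$ and the relevant sums are exactly $y-M$, so this is precisely the situation the lemma must handle. (To be fair, the paper's own write-up elides the same point: its list of increasing blocks is indexed by the string $1(0)_{\la_{i+K+1}}\ldots 1(0)_{\la_x}1(0)_{\la_1}\ldots 1(0)_{\la_i-1}$, which is meaningless when $\la_i=0$, and its count of $\sum\la_j$ increasing blocks is then also off by one.)

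The repair is short. Instead of sending only $a_1$ into $(1,2,\dots,N)$, send the entire maximal increasing run of the cyclic permutation that ends at $a_1$; every term of that run is $<a_1\leqs N$, so the whole run still fits inside $(1,2,\dots,N)$. Peeling that run off the back piece removes exactly one descent (the one at the run's left end, which exists unless the back piece together with $a_1$ is already a single increasing run, in which case one ascending copy suffices anyway), so the truncated back piece has at most $(\la_{K+1}+\cdots+\la_x)-1$ descents and splits into at most $M$ increasing blocks; the rest of your embedding then goes through verbatim. When $\la_x\geqs 1$ your argument is correct as written.
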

\begin{proof}[Proof of Lemma \ref{lem:2}] 
We have $\la_{i+1} + \la_{i+2} + \ldots + \la_{i+K} \geqs y-M$ and $a_{\bullet+\la_i+1}\leqs N$. Then the cycle $a_1, a_2, \dots, a_n$ is a juxtaposition of 

\renewcommand{\labelitemi}{$\triangleright$}
\begin{itemize}
   \item $K$ decreasing blocks $\left(a_{\bullet+1}> a_{\bullet+2}> \ldots > a_{\bullet+\la_j}>a_{\bullet+\la_j+1}\right)$, where $j \in [i+1,\; i+K]$, which correspond to
   $$(0)_{\la_{i+1}}1(0)_{\la_{i+2}}\ldots 1(0)_{\la_{i+K}},$$
   
   \item at most $M$ increasing blocks $(a_{\bullet}<a_{\bullet+1})$, $(a_{\bullet+2})$, $\ldots$, $(a_{\bullet+\la_j})$, where $j \in [i+K+1,\; x]\,\cup\, [1,\; i]$, which correspond to
   $$1(0)_{\la_{i+K+1}}\ldots 1(0)_{\la_x}\,1(0)_{\la_1}\ldots 1(0)_{\la_{i}\, - 1}\, ,$$
   for the reason that $$\sum_{j=i+K+1}^x \la_j + \sum_{j=1}^{i} \la_j = y - \sum_{j=i+1}^{i+K} \la_j\leqs M,$$
   
   \item one increasing block $(a_{\bullet+\la_i+1})$, where $a_{\bullet+\la_i+1}\leqs N$.
\end{itemize}
Hence, the cycle $(1, 2, \dots, n)_{M}\, (1, 2, \dots, N)\, (n, \dots, 2, 1)_{K}$ contains the permutation $a_1$, $a_2$, $\dots$, $a_n$ as a subsequence.
\end{proof}

The following theorem provides rosaries of odd degrees, its proof is a refinement of the method for Theorem \ref{th:1}.
\setlength{\leftmargini}{2.5em}
\begin{thm}\label{th:2}
Let $k$ be a positive integer.
\begin{itemize}
   \item[\textbf{a)}] If $n=4 k+1$, then the sequence
   \begin{equation}\label{eq:rosary4K1}
   (1, 2, \dots, n)_{k}\, (1, 2, \dots, 3k)\, (n, \dots, 2, 1)_{k-1}\, (n, \dots, 2)
   \end{equation}
   is a rosary of degree $n$.\vs

   \item[\textbf{b)}] If $n=4 k+3$, then the sequence
   \begin{equation}\label{eq:rosary4K3}
   (1, 2, \dots, n)_{k}\, (1, 2, \dots, 3\ktwo)\, (n, \dots, 2, 1)_{k}\, (n, \dots, 2)
   \end{equation}
   is a rosary of degree $n$.
\end{itemize}
\end{thm}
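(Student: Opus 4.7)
The plan is to emulate the proof of Theorem~\ref{th:1} with Lemma~\ref{lem:2} in place of Lemma~\ref{lem:1}: the extra parameter $N$ accommodates the partial ascending block $(1,\dots,3k)$ or $(1,\dots,3\ktwo)$ shortening the rosary, while the trailing partial block $(n,\dots,2)$ is absorbed by a cyclic rebracketing of the rosary. I argue by contradiction, assuming a permutation $a_1,\dots,a_n$ is not a subsequence of the candidate rosary.

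For part~(a) with $n=4k+1$, apply Lemma~\ref{lem:2} with $(K,M,N)=(k-1,\,k,\,3k)$. The associated cycle $(1,\dots,n)_k(1,\dots,3k)(n,\dots,1)_{k-1}$ is (\ref{eq:rosary4K1}) with the tail $(n,\dots,2)$ removed, hence is a sub-cycle of (\ref{eq:rosary4K1}). The non-subsequence hypothesis then forces that no index is $(k-1,k,3k)$-lucky, so for every $i\in[1,x]$,
\[
\la_{i+1}+\cdots+\la_{i+k-1}\leqs y-k-1\quad\textrm{or}\quad a_{\bullet+\la_i+1}>3k.
\]
Since the values $a_{\bullet+\la_i+1}$ are pairwise distinct elements of $\{1,\dots,n\}$, the second alternative can hold for at most $n-3k=k+1$ indices. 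Summing the first alternative on the other indices, using $\sum_i(\la_{i+1}+\cdots+\la_{i+k-1})=(k-1)y$ and $x+y=n$, yields after simplification the quadratic $x^2-(4k-1)x+3k^2-5k-2\leqs 0$, whose discriminant equals $(2k+3)^2$, so $k-2\leqs x\leqs 3k+1$. For part~(b) with $n=4k+3$, I first use the cyclic identity $(n,\dots,2)(1,\dots,n)=(n,\dots,2,1)(2,\dots,n)$ to rebracket (\ref{eq:rosary4K3}) as $(2,\dots,n)(1,\dots,n)_{k-1}(1,\dots,3\ktwo)(n,\dots,1)_{k+1}$; the Lemma~\ref{lem:2} cycle for $(K,M,N)=(k+1,\,k-1,\,3\ktwo)$ then embeds into this as a sub-cycle. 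The analogous summation produces $x^2-(4k+4)x+3(k+1)^2\leqs 0$, with discriminant $4(k+1)^2$, hence $k+1\leqs x\leqs 3k+3$. For each integer~$x$ in the respective range it remains to verify directly that the permutation is a subsequence of the rosary.

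The main obstacle is this direct verification. Unlike the proof of Theorem~\ref{th:1}, where the summed inequality is a perfect square forcing $x$ to a unique value (or to two consecutive values), the admissible interval here contains on the order of $2k$ integers, each of which must be examined individually. At the extremal values of~$x$ the constraints on $(\la_1,\dots,\la_x)$ become equalities, pinning the code of the permutation to rigid patterns --- almost all $\la_i$ equal to~$0$ at one end, nearly uniform at the other --- and the required embedding uses either the standard decomposition of the rosary or the cyclically rebracketed one, which provides $k$ (resp.~$k+1$) full descending blocks rather than $k-1$ (resp.~$k$) at the cost of reducing one ascending block to $(2,\dots,n)$. Assembling these two decompositions to cover the whole admissible interval is the heart of the technical work, in the spirit of the two-case analysis ($x=2k$ and $x=2k+1$) in the proof of Theorem~\ref{th:1}b.
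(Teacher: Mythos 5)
Your proposal does not close the argument: everything after the two quadratic inequalities is deferred (``for each integer $x$ in the respective range it remains to verify directly that the permutation is a subsequence of the rosary''), and that deferred verification over an interval of roughly $2k$ values of $x$ is precisely the content of the theorem. The reason you end up with such a wide window $k-2\leqs x\leqs 3k+1$ is that you start from Lemma \ref{lem:2} alone, with the shortened window $K=k-1$ and a per-index dichotomy, which forces you to concede $k+1$ indices to the alternative $a_{\bullet+\la_i+1}>3k$ before summing. The paper avoids this with a two-stage argument. First it applies Lemma \ref{lem:1} with the \emph{full} window $K=k$ (and $M=k$): this is legitimate because the conclusion cycle of Lemma \ref{lem:1} only needs $(n,\dots,3,2)$ for its last descending run, so the trailing block $(n,\dots,2)$ of (\ref{eq:rosary4K1}) supplies a usable $k$-th descending run, and one gets the unconditional inequality $\la_{i+1}+\dots+\la_{i+k}\leqs y-k$ for \emph{every} $i$, with no dichotomy. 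Summing these while keeping track of the equality set $E$ with $q=|E|$ gives $x^2-(n-1)x+kn-q\leqs 0$, whose discriminant condition yields $q\geqs k$; when $q>k$ a direct pigeonhole already applies, and when $q=k$ the quadratic becomes $(x-2k)^2\leqs 0$, forcing $x=2k$ exactly. Only then is Lemma \ref{lem:2} invoked: the $q$ valley values $a_{\bullet+\la_i+1}$, $i\in E$, are pairwise distinct, at most $n$, and never equal to $n$, so counting them against the interval $[3\k1,4k]$ (plus, in the case $q=k$, a short extra argument using the block lengths $\la_i+1$ and the indices $u\in E$ with $\medmuskip=0mu u+1\notin E$) produces a $(k,k,3k)$-lucky index, and Lemma \ref{lem:2} finishes. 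Part b) is the same with $K=k+1$, $M=k$, where the discriminant gives $q\geqs k+1$ outright and the pigeonhole is immediate, with no case split at all.

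Your cyclic identity $(n,\dots,2)(1,\dots,n)=(n,\dots,2,1)(2,\dots,n)$ is a correct and relevant observation (it is implicitly what makes the full-window application of Lemma \ref{lem:1} to the truncated rosary work), but it does not substitute for the missing verification. As written, your proposal establishes only necessary conditions on the code of a hypothetical counterexample; the ``heart of the technical work'' you yourself identify is exactly what is absent, so the proof is incomplete.
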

\begin{proof} 
\textbf{a)} Let us assume that the cycle $a_1, a_2, \dots, a_n$ is not a subsequence of (\ref{eq:rosary4K1}). Applying Lemma \ref{lem:1} with $K=M=k$, we obtain
\begin{equation*}
\la_{i+1} + \la_{i+2} + \ldots + \la_{i+k} \leqs y-k, \qquad\textrm{for each } 1\leqs i\leqs x.
\end{equation*}
Denote by $E\subset\set{1, \dots, x}$ the set of indices $i$ such that the following equality holds
$$\la_{i+1} + \la_{i+2} + \ldots + \la_{i+k} = y-k,$$
and so for all $i\notin E$, we have the inequality
$$\la_{i+1} + \la_{i+2} + \ldots + \la_{i+k} \leqs y-k-1.$$
Let $q$ be the cardinal of the set $E$. Summing up the $q$ equalities and $(x-q)$ inequalities gives
\begin{multline*}
  k y = (\la_{1} + \la_{2} + \ldots + \la_{k}) + (\la_{2} + \la_{3} + \ldots + \la_{k+1}) + \ldots + (\la_{x} + \la_{1} + \ldots + \la_{k-1}) \\
  \leqs q\cdot (y-k) + (x-q)\cdot (y-k-1)= x (y-k) - (x-q). 
\end{multline*}
Since  $y=n-x$, we get
\begin{eqnarray*}
k(n-x)\leqs x(-x+n-k) - (x-q), \\
x^2-(n-1) x+k n-q\leqs 0. 
\end{eqnarray*}
Therefore, the discriminant of the polynomial $t^2-(n-1) t+k n-q$ must be nonnegative:
$$\mathscr{D} = (n-1)^2 - 4(k n - q) = (4k)^2 - 4k(4k+1) + 4q = 4q-4k\geqs 0,$$
that is, $q\geqs k$. 
We have $q$ indices $i\in E$ for which
$$\la_{i+1} + \la_{i+2} + \ldots + \la_{i+k} = y-k.$$

Consider the set of the last terms $a_{\bullet+\la_i+1}$ of the maximal decreasing blocks $$a_{\bullet+1}> a_{\bullet+2}> \ldots > a_{\bullet+\la_i}>a_{\bullet+\la_i+1}$$ encoded by the parts $(0)_{\la_i}$ of $c_1, c_2, \ldots, c_n$ where $i\in E$. 
The numbers $a_{\bullet+\la_i+1}$ are different and don't exceed $n=4k+1$. Moreover, such a number cannot be equal to $n$, else the block $a_{\bullet+1}>  \ldots >a_{\bullet+\la_i+1}$ wouldn't be maximal. 

We shall now investigate the cases where $q>k$ and $q=k$ separately.

\vss\vss
\underline{Case $q>k$}.
Since the interval $[3\k1, 4k]$ contains $k$ integers and $|E|=q\geqs k+1$, there exists an index $e\in E$ such that $a_{\bullet+\la_e+1}\leqs 3k$. Then the index $e$ is $(k, k, 3k)$-lucky, and so according to Lemma \ref{lem:2}, the permutation $a_1, a_2, \dots, a_n$ is a subsequence of the cycle (\ref{eq:rosary4K1}).

\vss\vss
\underline{Case $q=k$}. The inequality $x^2-(n-1) x+k n-q=(x-2k)^2\leqs 0$ implies that $x=2k$, and so $y=2k+1$. For exactly $k=|E|$ values of $i\in \set{1, 2, \dots, 2k}$, we have the equality 
\begin{equation*}
   \la_{i+1} + \la_{i+2} + \ldots + \la_{i+k} = k+1,
\end{equation*}
For others, the equality $ \la_{i+1} + \la_{i+2} + \ldots + \la_{i+k} = k$ takes place, since $\la_1+\la_2+\ldots+\la_{2k}=2k+1$. Suppose that $a_{\bullet+\la_i+1} > 3k$ for all indices $i\in E$. Counting the number of integers in the maximal decreasing blocks $a_{\bullet+1}>  \ldots >a_{\bullet+\la_i+1}$, we conclude that there are at least
\begin{equation}\label{eq:1}
   \sum_{i\in E} (\la_i + 1) = k+\sum_{i\in E} \la_i
\end{equation}
integers in the interval $[3\k1, 4\k1]$. Hence, we have $\sum_{i\in E} \la_i\leqs 1$. Let $u\in E$ be an index such that $\medmuskip=0mu u+1\notin E$ (modulo $2k$). The number $a_{\bullet+\la_u+2}$ is not counted in (\ref{eq:1}), and yet $3k<a_{\bullet+\la_u+1}<a_{\bullet+\la_u+2}$. If $\sum_{i\in E} \la_i= 1$, then the interval $[3\k1, 4\k1]$ contains at least
$$\left(k+\sum_{i\in E}\la_i\right) + 1= k+2$$
integers, which is impossible.

If $\sum_{i\in E} \la_i= 0$, then there are two distinct indices $u, v\in E$ such that $\medmuskip=0mu u+1, v+1\notin E$ (modulo $2k$), otherwise, the set $E$ would be a subset of $k$ consecutive elements of the cycle $1, 2, \dots, 2k$, and so
$\sum_{i\in E} \la_i\geqs k>0$. We thus get two numbers $a_{\bullet+\la_u+2}$ and $a_{\bullet+\la_v+2}$ for which $3k<a_{\bullet+\la_u+1}<a_{\bullet+\la_u+2}$ and $3k<a_{\bullet+\la_v+1}<a_{\bullet+\la_v+2}$. Together with (\ref{eq:1}), this gives 
$$\left(k+\sum_{i\in E}\la_i\right) + 2= k+2$$
distinct integers in the interval $[3\k1, 4\k1]$, which is impossible. Therefore, $a_{\bullet+\la_i+1} \leqs 3k$ for at least one index $i\in E$. Such an index is $(k, k, 3k)$-lucky, and according to Lemma \ref{lem:2}, the permutation $a_1, a_2, \dots, a_n$ is a subsequence of the cycle (\ref{eq:rosary4K1}).

\vss\vss
In both cases we obtain a contradiction to the initial assumption that $a_1, a_2, \dots, a_n$ is not a subsequence of the cycle (\ref{eq:rosary4K1}).

\vs\noindent\textbf{b)} Assume that the permutation $a_1, a_2, \dots, a_n$ is not a subsequence of the cycle (\ref{eq:rosary4K3}). According to Lemma \ref{lem:1} with $K=k+1$ and $M=k$, we have
\begin{equation*}
\la_{i+1} + \la_{i+2} + \ldots + \la_{i+k+1} \leqs y-k, \qquad\textrm{for each } 1\leqs i\leqs x.
\end{equation*}
Let $E\subset\set{1, \dots, x}$ be the set of indices $i$ such that
$$\la_{i+1} + \la_{i+2} + \ldots + \la_{i+k+1} = y-k.$$
Thus,  for any $i\notin E$, the following inequality holds
$$\la_{i+1} + \la_{i+2} + \ldots + \la_{i+k+1} \leqs y-k-1.$$
Summing up these $q=|E|$ equalities and $(x-q)$ inequalities gives
\begin{multline*}
  (k+1) y = (\la_{1} + \la_{2} + \ldots + \la_{k+1}) + (\la_{2} + \la_{3} + \ldots + \la_{k+2}) + \ldots + (\la_{x} + \la_{1} + \ldots + \la_{k}) \\
  \leqs q\cdot (y-k) + (x-q)\cdot (y-k-1)= x (y-k) - (x-q), 
\end{multline*}
Since  $y=n-x$, we get
\begin{eqnarray*}
(k+1)(n-x)\leqs x(-x+n-k) - (x-q), \\
x^2-n x+(k+1) n-q\leqs 0. 
\end{eqnarray*}
Therefore, the discriminant of the polynomial $t^2-n t+(k+1) n-q$ must be nonnegative:
$$\mathscr{D} = n^2 - 4(k+1) n +4q = (4k+3)^2 - 4(k+1)(4k+3) + 4q = 4q-(4k+3)\geqs 0,$$
that is, $q\geqs k+1$ as $q\in \Z$. 
We have $q$ indices $i\in E$ for which
$$\la_{i+1} + \la_{i+2} + \ldots + \la_{i+k+1} = y-k.$$

Consider the set of the last terms $a_{\bullet+\la_i+1}$ of the maximal decreasing blocks $$a_{\bullet+1}> a_{\bullet+2}> \ldots > a_{\bullet+\la_i}>a_{\bullet+\la_i+1}$$ encoded by the parts $(0)_{\la_i}$ of $c_1, c_2, \ldots, c_n$ where $i\in E$. 
The numbers $a_{\bullet+\la_i+1}$ are different and don't exceed $n=4k+3$. Moreover, such a number cannot be equal to $n$, else the block $a_{\bullet+1}> a_{\bullet+2}> \ldots > a_{\bullet+\la_i}>a_{\bullet+\la_i+1}$ wouldn't be maximal. 
Therefore, since $q\geqs k+1$, there exists an index $e\in E$ such that $a_{\bullet+\la_e+1}\leqs 3k+2$. Then the index $e$ is $(\k1, k, 3\ktwo)$-lucky, and so by Lemma \ref{lem:2}, the permutation $a_1, a_2, \dots, a_n$ is a subsequence of the cycle (\ref{eq:rosary4K3}).

This contradiction to the initial assumption proves the statement of the theorem.
\end{proof}

For $n=4k+1$, the length of the rosary (\ref{eq:rosary4K1}) of degree $n$ is equal to
$$2k\cdot n + 3k-1=8k^2+5k -1< \frac{16k^2+8k+1}{2}+\frac{4k+1}{4}-1= \frac{n^2}{2}+\frac{n}{4}-1.$$

For $n=4k+3$, the length of the rosary (\ref{eq:rosary4K3}) of degree $n$ is equal to
$$(2k+1)\cdot n + 3k+1=8k^2+13k +4 < \frac{16k^2+24k+9}{2}+\frac{4k+3}{4}-1= \frac{n^2}{2}+\frac{n}{4}-1.$$

\begin{cor} 
If $n>1$ is odd, then $r(n)<\frac{n^2}{2}+\frac{n}{4}-1$.
\end{cor}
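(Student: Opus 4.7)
The plan is to bypass any further combinatorial analysis and reduce everything to a direct length count of the explicit rosaries already produced by Theorem \ref{th:2}. Since $n$ is odd and greater than $1$, I would split into the two residue classes $n=4k+1$ and $n=4k+3$ (with $k\geqs 1$ in the first case and $k\geqs 0$ in the second), and treat each separately by reading off the rosary constructed in Theorem \ref{th:2}.

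For $n=4k+1$, I would take the rosary (\ref{eq:rosary4K1}), namely $(1,2,\dots,n)_k\,(1,2,\dots,3k)\,(n,\dots,2,1)_{k-1}\,(n,\dots,2)$, and sum the lengths of its four pieces: $kn$, $3k$, $(k-1)n$, and $n-1$, giving a total of $2kn+3k-1$. Substituting $n=4k+1$ turns this into $8k^2+5k-1$. Separately, I would expand $\frac{n^2}{2}+\frac{n}{4}-1=\frac{(4k+1)^2}{2}+\frac{4k+1}{4}-1=8k^2+5k-\frac{1}{4}$, and observe that $8k^2+5k-1<8k^2+5k-\frac{1}{4}$, which is immediate.

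For $n=4k+3$, I would do the analogous computation on the rosary (\ref{eq:rosary4K3}): the four pieces have lengths $kn$, $3k+2$, $kn$, and $n-1$, summing to $(2k+1)n+3k+1$. Substituting $n=4k+3$ yields $8k^2+13k+4$, while $\frac{n^2}{2}+\frac{n}{4}-1=\frac{(4k+3)^2}{2}+\frac{4k+3}{4}-1=8k^2+13k+4+\frac{1}{4}$, so once again the strict inequality holds by a quarter. These two verifications together cover all odd $n>1$ and establish the corollary.

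There is no real obstacle here: all the combinatorial work has been carried out in Theorem \ref{th:2}, so the only step is the arithmetic comparison, and the small slack of $\tfrac{3}{4}$ (respectively $\tfrac{1}{4}$) between the length and the bound is visible by inspection. The only cosmetic issue is to confirm that the small cases ($k=1$ for $n=5$, $k=0$ for $n=3$) are covered; for $n=3$ the construction (\ref{eq:rosary4K3}) degenerates to $(1,2,3)\,(3,2)$, which is easily checked by hand to be a rosary of degree $3$ and length $5<\frac{9}{2}+\frac{3}{4}-1$, so no separate argument is needed.
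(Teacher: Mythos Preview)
Your approach is exactly the paper's: compute the length of the rosaries from Theorem~\ref{th:2} and compare with $\frac{n^2}{2}+\frac{n}{4}-1$. For $n\geqs 5$ your arithmetic is correct and matches the paper line for line.

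The only genuine problem is your treatment of $n=3$. First, plugging $k=0$ into (\ref{eq:rosary4K3}) does not give $(1,2,3)(3,2)$; the blocks $(1,\dots,n)_0$ and $(n,\dots,1)_0$ are empty, so the sequence is $(1,2)(3,2)=1,2,3,2$ of length $4$ (consistent with your own formula $(2k+1)n+3k+1=4$). Second, this sequence is \emph{not} a rosary of degree $3$: the permutation $2,3,1$ does not occur in it, so Theorem~\ref{th:2} really does require $k\geqs 1$. Third, and most directly, your asserted inequality $5<\frac{9}{2}+\frac{3}{4}-1$ is false, since the right-hand side equals $\frac{17}{4}=4.25$. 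To cover $n=3$ you need a rosary of length at most $4$; the cycle $1,2,1,3$ from Figure~\ref{fig:rosary1} does the job, giving $r(3)\leqs 4<\frac{17}{4}$.
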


\vs
Let us now bring examples showing that one shouldn't expect obtaining the inequality $r(n)\leqs \frac{n^2}{2}$ by improving the construction of Theorem \ref{th:2}. 

Consider the permutation given in Figure \ref{fig:permutation}.
\begin{figure}[htbp] \label{fig:permutation}
   \begin{center}
   \begin{tikzpicture}[scale=.6]
      \draw[step=1cm,help lines] (0,-2) grid (21,1);
      
      \foreach \x/\num in {0/1, 1/5, 2/4, 3/3, 4/18, 5/17, 6/19, 7/20, 8/21, 9/6, 10/9, 11/8, 12/7, 13/12, 14/11, 15/10, 16/13, 17/14, 18/15, 19/16, 20/2}
      { \draw (\x,0)+(.5,.5) node {\num}; }
      
      \foreach \x/\xx/\l in {0/2/2,2/3/1,3/5/2,5/9/4,9/11/2,11/12/1,12/14/2,14/15/1,15/20/5,20/21/1}
      {\path[-,red,thick] (\x,-1) edge node[xshift=3pt,yshift=-3pt]{\tiny \l} (\xx,0);}

      \foreach \x/\xx/\l in {1/4/3,4/6/2,6/7/1,7/8/1,8/10/2,10/13/3,13/16/3,16/17/1,17/18/1,18/19/1}
      {\path[-,blue,thick] (\x,-1) edge node[xshift=-3pt,yshift=-3pt]{\tiny \l} (\xx,-2);}
       \path[-,blue,thick] (19,-1) edge (21,-1.666)
       					(0,-1.666) edge (1,-2);
      \node[blue,thick,xshift=-3pt,yshift=-3pt] at (20.5,-1.5) {\tiny 3};
   \end{tikzpicture}
    \caption{A permutation of the numbers $1, 2, \ldots, 21$.}
   \end{center}
\end{figure}
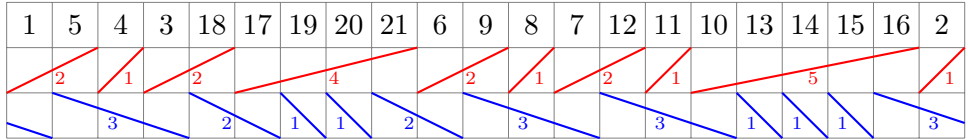
It has 10 maximal increasing blocks and 11 maximal decreasing blocks (they are illustrated by the ascending and the descending segments respectively). By trying to arrange 5 consecutive increasing blocks of the cycle into the string $(1, 2, \dots, 21)_5$ so that remaining part would fit in the string $(1, 2, \dots, 12)\, (21, \dots, 2, 1)_5$, one receives evidence that the permutation is not a subsequence of the cycle
$$(1, 2, \dots, 21)_5\, (1, 2, \dots, 12)\, (21, \dots, 2, 1)_4\, (21, \dots, 2)$$
of length $221= \left[\sfrac{21^2}{2}\right]+1$.
Thus, in the statement of Theorem \ref{th:2} with  $k=5$ and $n=21$, we couldn't have replaced the block $(1, 2, \dots, 3k)$ by the block $(1, 2, \ldots, 2\k1)$ in order to obtain the estimation $r(n)\leqs \frac{n^2}{2}$.

An example for $k=8$ and $n=33$: the permutation 
\begin{multline*} 
1,5,4,3,2,7,6,11,10,9,8,18,19,20,21,22,23,14,13,12,25,\\
24,26,17,16,15,28,27,29,30,31,32,33
\end{multline*} 
is not a subsequence of the cycle $(1, 2, \dots, 33)_8\, (1, 2, \dots, 17)\, (33, \dots, 2, 1)_7\, (33, \dots, 2)$ of length $544= \left[\sfrac{33^2}{2}\right]$.

\begin{qn} 
What is the value of $r(n)$ when $n=21$ or $33$?
\end{qn}

\vvs
\section{The asymptotic behaviour of $r(n)$}
According to H.\ Gupta, his conjecture was motivated by the problem of constructing a shortest \emph{string} which contains all permutations of the numbers $1, 2, \dots, n$ as subsequences. The latter was first suggested by R.\ M.\ Karp (as mentioned in \cite{knuth}).

Denote by $s(n)$ the length of such a string. The inequality $s(n)\leqs n^2 - 2n + 4$ was provided by several people though explicit constructions (see, for instance, \cite{adleman, koutas, mohanty}). 

In 1975, D. J. Kleitman and D. J. Kwiatkowski \cite{kleitman} established the lower bounds
$$n^2 - C_{\ep}\, n^{7/4+\ep}<s(n),$$
for any real number $\ep>0$ and a constant $C_{\ep}$ depending on $\ep$. 

\vs\vs
\begin{thm} 
The function $r$ is equivalent to $\frac{n^2}{2}$ as integer $n$ tends to $+\infty$.
\end{thm}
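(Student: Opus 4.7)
The plan is to sandwich $r(n)$ between matching upper and lower bounds, both of the form $\frac{n^2}{2}(1+o(1))$, so the equivalence follows immediately. The upper bound is already in hand from the work done earlier in the paper: Theorem \ref{th:1} gives $r(n) \leqs \frac{n^2}{2}$ for even $n$, while the Corollary gives $r(n) < \frac{n^2}{2}+\frac{n}{4}-1$ for odd $n$. In both cases $r(n) \leqs \frac{n^2}{2} + O(n)$, hence $\limsup_{n\to\infty} \frac{2r(n)}{n^2} \leqs 1$.

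For the lower bound I would compare $r(n)$ with the linear (non-cyclic) analogue $s(n)$ introduced at the beginning of this section. The key observation is that a rosary can be cut open: if $x_1,x_2,\dots,x_r$ is a rosary of length $r=r(n)$, then the string
\begin{equation*}
   x_1,\, x_2,\, \dots,\, x_r,\, x_1,\, x_2,\, \dots,\, x_{r-1}
\end{equation*}
of length $2r-1$ contains, for every $1\leqs j\leqs r$, the cyclic block $x_j, x_{j+1}, \dots, x_{j-1}$ as a contiguous substring, and therefore contains every permutation of $\set{1,2,\dots,n}$ as a subsequence. Thus
\begin{equation*}
   s(n) \leqs 2\, r(n) - 1.
\end{equation*}

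Now I would invoke the Kleitman--Kwiatkowski lower bound $s(n) > n^2 - C_{\ep}\, n^{7/4+\ep}$ recalled above, which yields
\begin{equation*}
   r(n) \;\geqs\; \frac{s(n)+1}{2} \;>\; \frac{n^2}{2} - \frac{C_{\ep}}{2}\, n^{7/4+\ep} + \frac{1}{2}.
\end{equation*}
Combining this with the upper bound gives
\begin{equation*}
   1 - C'_{\ep}\, n^{-1/4+\ep} \;\leqs\; \frac{r(n)}{n^2/2} \;\leqs\; 1 + \frac{1}{2n},
\end{equation*}
so $r(n)/(n^2/2) \to 1$ as $n\to+\infty$, which is the claim.

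There is no real obstacle in this argument: the upper bound is one of the main results of the paper, and the lower bound is a one-line transfer from the string problem to the cyclic one via doubling. The only point that requires a moment of care is the cut-open construction, where one must verify that every starting position $j \in \{1,\dots,r\}$ produces the required cyclic reading inside the string of length $2r-1$; for $j=r$ this reading occupies positions $r, r+1, \dots, 2r-1$, which is exactly the last admissible index. Any weaker quantitative lower bound on $s(n)$ of the form $s(n) = n^2(1-o(1))$ would equally well suffice for the asymptotic statement, so the proof does not rely on the precise exponent $7/4$.
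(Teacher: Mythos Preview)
Your proof is correct and follows essentially the same route as the paper: an upper bound from the explicit rosary constructions, a lower bound obtained by doubling a rosary to produce a string witnessing $s(n)\leqs 2r(n)$, and then the Kleitman--Kwiatkowski bound on $s(n)$. Your use of $2r(n)-1$ instead of $2r(n)$ is a harmless sharpening that changes nothing asymptotically.
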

\begin{proof} 
By Theorems \ref{th:1} and \ref{th:2}, the inequality $r(n)<\frac{n^2}{2}+\frac{n}{4}$ is satisfied for any natural $n>1$. On the other hand, the function $r$ is bounded below:
$$s(n)\leqs 2 r(n).$$
Indeed, if $a_1, a_2, \dots, a_n$ is a rosary of degree $n$, then the string $a_1, a_2, \dots, a_n, a_1, a_2, \dots, a_n$ contains all permutations of the first $n$ natural numbers as subsequences. 

Finally, using the result of Kleitman and Kwiatkowski, we obtain 
$$\frac{n^2}{2} - C\, n^{15/8} < r(n) < \frac{n^2}{2}+\frac{n}{4}$$
for a constant $C>0$. Therefore, $r(n)\sim \frac{n^2}{2}$ as $n\to +\infty$.
\end{proof}

\vvs

\vvs

\end{document}